\newtheorem{theorem}{Theorem}
\newtheorem{lemma}[theorem]{Lemma}
\newtheorem{rem}[theorem]{Remark}
\newcommand{\ord}{{\mathrm{ord}}}
\newcommand{\tr}{{\mathrm{Tr}}}
\newcommand{\gf}{{\mathrm{GF}}}
\newcommand{\PG}{{\mathrm{PG}}}
\newcommand{\wt}{{\mathtt{wt}}}
\newcommand{\C}{{\mathcal{C}}}
\newcommand{\E}{{\mathcal{E}}}
\newcommand{\bc}{{\mathbf{c}}}
\newcommand{\be}{{\mathbf{e}}}
\newcommand{\bzero}{{\mathbf{0}}}
\newcommand{\bone}{{\mathbf{1}}}
\begin{document}

\date{}

\title{Maximal arcs and extended cyclic codes}

\author{Stefaan De Winter\thanks{Michigan Technological University,
Houghton, MI 49921, USA},\,
Cunsheng Ding\thanks{The Hong Kong University of Science and Technology, Hong Kong},
and Vladimir D. Tonchev\thanks{Michigan Technological University Houghton, MI 49931, USA}
}

\maketitle

\begin{abstract}
It is proved that for every $d\ge 2$ such that
$d-1$  divides $q-1$, where $q$ is a power of 2,
there exists a Denniston maximal arc $A$ of degree $d$ in $\PG(2,q)$, 
being invariant under a cyclic linear group that fixes one point of $A$
and acts regularly on the set of the remaining points of ${A}$.
Two alternative proofs are given, one geometric proof based on
Abatangelo-Larato's characterization of Denniston arcs, and a second 
coding-theoretical proof based on cyclotomy and the link between maximal 
arcs and two-weight codes.

\end{abstract}

{\bf MSC 2010 codes:} 05B05, 05B25, 51E15, 94B15

\vspace{2mm}
{\bf Keywords:} Maximal arc, 2-design, two-weight code, cyclic code.

\section{Introduction}

Suppose that $P$ is a projective plane of order $q=ds$. A {\it maximal}
$((sd-s+1)d, d)$-{\it arc} (or a maximal arc of degree $d$), 
is a set $A$ of $(sd-s+1)d$ points of $P$ such that every line
of $P$ is ether disjoint from $A$ or meets $A$ in exactly $d$ points
\cite{B}, \cite{Hir}.
The collection of lines of $P$ which have no points in common with $A$
determines a maximal  $((sd-d+1)s, s)$-arc ${A}^{\perp}$
(called a {\it dual arc}) in the dual plane ${P}^{\perp}$.
A {\it hyperoval} is a maximal arc of degree 2.

Maximal arcs of degree $d$ with $1< d < q$ do not exist 
in any Desarguesian plane of odd order
$q$ \cite{BBM},
and are known to exist in every Desarguesian plane of
even order (Denniston \cite{Den}, Thas \cite{Thas74}, \cite{Thas80};
see also  \cite{DWM}, \cite{HM1}, \cite{HM2}, \cite{Math}),
 as well as in some non-Desarguesian planes of even order
\cite{H1}, \cite{H2}, \cite{H3}, \cite{H4},
\cite{HST}, \cite{PRS}, \cite{Thas74}, \cite{Thas80}.

In \cite{AL} Abatangelo and Larato proved that a maximal arc
$A$ in $\PG(2,q)$, $q$ even, is a Denniston arc (that is, $A$ 
can be obtained via Denniston's construction \cite{Den}), if and only if $A$ is 
invariant under a linear collineation of $\PG(2,q)$, being a cyclic group of order $q+1$.
Collineation groups of maximal arcs in $\PG(2,2^t)$ are
further studied in \cite{HP}.

Abatangelo-Larato's characterization of Denniston's
arcs implies, in particular, that a regular 
hyperoval $\cal{H}$ in
$\PG(2,2^t)$ is characterized by the property that $\cal{H}$ is stabilized by
a cyclic collineation group of order $q+1$ that fixes one point of $\cal{H}$
and acts regularly on the remaining $q+1$ points of $\cal{H}$.
Consequently, the two-weight $q$-ary code associated with  $\cal{H}$ (cf.
\cite{CK}), is an extended cyclic code.

The subject of this paper is a class of maximal arcs that generalize 
this property of regular hyperovals.
It is proved that for every $d\ge 2$ such that
$d-1$  divides $q-1$, where $q$ is a power of 2, 
there exists a maximal arc $A$ of degree $d$ in $\PG(2,q)$ that 
is invariant under a cyclic linear group that fixes one point of $A$ 
and acts regularly on the set of the remaining points of $A$,
hence, the two-weight code $C$ associated with $A$ is an extended cyclic code.
Two alternative proofs are given, one geometric proof based on
Abatangelo-Larato's characterization of Denniston arcs, and a coding-theoretic proof
based on cyclotomy.

\section{Maximal arcs with a cyclic automorphism group}

\begin{theorem}
\label{t1}
  Let $q=2^{km}$ and  $d=2^m$, ($m, k \ge 1$).
There exists a partition of AG$(2,q)$
into $\frac{q-1}{d-1}$ maximal Denniston arcs of degree $d$ sharing a unique point,
 and such that there is a cyclic group $G$ acting sharply transitively on the points
 of each of the arcs distinct from the nucleus.
\end{theorem}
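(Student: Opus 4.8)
The strategy is to realise Denniston's pencil of conics inside the field $\F_{q^2}$ and then to extract both the partition and the group $G$ from the multiplicative structure of $\F_{q^2}^{\ast}$. First I would fix $\delta\in\F_q$ with $\tr_{\F_q/\F_2}(\delta)=1$, so that $X^2+X+\delta$ is irreducible over $\F_q$, write $\F_{q^2}=\F_q(\theta)$ with $\theta^2+\theta=\delta$, and identify $\AG(2,q)$ with $\F_{q^2}$ via $(x,y)\mapsto x+y\theta$. A short computation gives $N_{\F_{q^2}/\F_q}(x+y\theta)=x^2+xy+\delta y^2$, so the conics of Denniston's pencil are precisely the fibres $C_\lambda=N^{-1}(\lambda)$ of the norm map $N\colon\F_{q^2}\to\F_q$ (with common nucleus $0=N^{-1}(0)$), and the affine lines of $\AG(2,q)$ are exactly the cosets $w_0+\F_q w$, $w\neq 0$. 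For an additive subgroup $H\le(\F_q,+)$ I set $\mathcal D_H:=N^{-1}(H)=\bigcup_{\lambda\in H}C_\lambda$, which has $1+(|H|-1)(q+1)$ points.

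Next I would check that $\mathcal D_H$ is a maximal arc of degree $|H|$. This is Denniston's construction \cite{Den}, but it is quick: restricting $N$ to a line, $N(w_0+tw)=N(w)t^{2}+\Tr_{\F_{q^2}/\F_q}(w_0\bar w)\,t+N(w_0)$ is either a bijection in $t$, so the line meets $\mathcal D_H$ in exactly $|H|$ points, or is two-to-one onto an affine $\F_2$-hyperplane $S=\{\lambda:\tr_{\F_q/\F_2}(\kappa\lambda)=\epsilon\}$ of $\F_q$; a short computation using $\theta^2+\theta=\delta$ and $\tr_{\F_q/\F_2}(\delta)=1$ shows $\epsilon=1$ whenever $\kappa\in H^{\perp}$, so $|\mathcal D_H\cap\ell|=2|H\cap S|\in\{0,|H|\}$ in every case. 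Since $\mathcal D_H$ is moreover invariant under multiplication by the norm-one subgroup $\ker N$, cyclic of order $q+1$, the characterization of \cite{AL} confirms that $\mathcal D_H$ is a Denniston arc.

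Because $q=2^{km}$, the field $\F_q$ contains $\F_{2^m}$ and is a $k$-dimensional vector space over it; its $r=\frac{2^{km}-1}{2^m-1}=\frac{q-1}{d-1}$ one-dimensional $\F_{2^m}$-subspaces $H_1,\dots,H_r$ pairwise meet in $\{0\}$ and cover $\F_q$. Hence $\mathcal D_{H_1},\dots,\mathcal D_{H_r}$ are maximal Denniston arcs of degree $d$, all containing the nucleus $0$, with $\mathcal D_{H_i}\cap\mathcal D_{H_j}=N^{-1}(H_i\cap H_j)=\{0\}$ and $\bigcup_i\mathcal D_{H_i}=N^{-1}(\F_q)=\AG(2,q)$: the required partition into $\frac{q-1}{d-1}$ arcs sharing one point. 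The multiplicative stabiliser of each $H_i$ in $\F_q^{\ast}$ is exactly $\F_{2^m}^{\ast}$, so I would take
\[
  G:=N^{-1}(\F_{2^m}^{\ast})=\{\mu\in\F_{q^2}^{\ast}\colon N_{\F_{q^2}/\F_q}(\mu)\in\F_{2^m}\},
\]
a subgroup of the cyclic group $\F_{q^2}^{\ast}$ (hence cyclic) of order $(q+1)(d-1)$. Multiplication by $\mu\in G$ is an $\F_q$-linear collineation of $\AG(2,q)$ fixing $0$, and since $N(\mu z)=N(\mu)N(z)$ with $N(\mu)$ stabilising every $H_i$, it carries each $\mathcal D_{H_i}$ onto itself. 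Choosing $z_i\in\F_{q^2}^{\ast}$ with $N(z_i)\in H_i$, one sees $\mathcal D_{H_i}\setminus\{0\}=z_iG$, a coset of $G$ of size $(q+1)(d-1)=|\mathcal D_{H_i}|-1$; as $G$ acts on any of its cosets by multiplication sharply transitively, $G$ acts sharply transitively on the points of $\mathcal D_{H_i}$ other than the nucleus, simultaneously for all $i$. (Equivalently, the punctured arcs are exactly the $G$-cosets in $\F_{q^2}^{\ast}$, so the two-weight code of each arc is an extended cyclic code.)

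The genuinely delicate point is the maximal-arc verification — that no line meets $\mathcal D_H$ in $2|H|$ points — which is precisely where the irreducibility hypothesis $\tr_{\F_q/\F_2}(\delta)=1$ is used (or where one invokes Denniston's theorem, or \cite{AL}). After that the construction is essentially forced: the single group $G$ serves every $\mathcal D_{H_i}$ only because one uses the $\F_{2^m}$-linear spread of $\F_q$, so that all $r$ multiplicative stabilisers coincide, and the numerical coincidence $[\F_{q^2}^{\ast}:G]=r=$ (number of $\F_{2^m}$-lines in $\F_q$) $=$ (number of parts of the partition) makes the $G$-cosets on $\F_{q^2}^{\ast}$ match the punctured arcs exactly.
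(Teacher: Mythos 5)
Your construction is correct, but it is not the route the paper takes for this theorem. The paper argues projectively: it takes the standard pencil $x^2+bxy+y^2+lz^2=0$ in $\PG(2,q)$, quotes Denniston \cite{Den} for the maximal-arc property, quotes Abatangelo--Larato \cite{AL} to know that the matrix group $G_1$ is cyclic of order $q+1$ and sharply transitive on each conic of the pencil, and then forms $G=G_1\times G_2$ with the diagonal group $G_2$ of order $d-1$ (cyclic because the orders are coprime); the partition comes from the multiplicative cosets $\lambda\F_d^{*}\cup\{0\}$, exactly your one-dimensional $\F_{2^m}$-subspaces. You instead work in the field model $\AG(2,q)\cong\F_{q^2}$, realize the pencil as the fibres of the norm $N$, and get everything from $\F_{q^2}^{*}$: the group $G=N^{-1}(\F_d^{*})$ is cyclic for free, and sharp transitivity on each punctured arc is immediate because $N^{-1}(H_i^{*})$ is literally a coset of $G$. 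This buys you independence from \cite{AL} (your citation of it to ``confirm Denniston type'' is redundant anyway, since $N^{-1}(H)$ is Denniston's construction by definition) and makes the extended-cyclic-code statement transparent; in fact your argument is essentially the cyclotomic viewpoint the paper itself uses later, in the proof of Lemma \ref{lem-mainlemma}, where the punctured arc appears as the cyclotomic class $C_0^{(N,q^2)}$ — except that the paper there leans on Theorem \ref{t1} and \cite{AL}, while you compute directly with the norm. What the paper's route buys is the explicit collineation matrices and the \cite{AL} machinery that is genuinely needed for the converse statement, Theorem \ref{t2}. One small imprecision on your side: in the direct verification of the maximal-arc property, the clean reason the value set of $N$ on a line missing the nucleus is the \emph{nonzero} coset of an index-$2$ additive subgroup is simply that $N$ vanishes only at $0$; the hypothesis $\tr_{\F_q/\F_2}(\delta)=1$ enters only in making $X^2+X+\delta$ irreducible so that the model $\F_{q^2}=\F_q(\theta)$ exists. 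Since you also allow yourself to cite \cite{Den} for this step — exactly as the paper does — this is a presentational blemish, not a gap.
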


\begin{proof}
 Assume $x^2+bx+1$ is an irreducible quadratic form over $\mathbb{F}_q$, 
and let $F_l$, $l\in \mathbb{F}_q\cup \{\infty\}$, be the conic in PG$(2,q)$ with equation
 $x^2+bxy+y^2+lz^2=0$. It is clear that $F_0$, the point $(0,0,1)$ is the nucleus of each
 of the $q-1$ nondegenerate conics $F_l$, $l\in \mathbb{F}_q^*$, and let $F_\infty$ be
 the line $z=0$. We will partition the affine plane AG$(2,q)=\PG(2,q)\setminus (z=0)$.

Let $\mathbb{F}_d$ be the unique subfield of order $d$ of $\mathbb{F}_q$. 
Let $H$ be the additive group of $\mathbb{F}_d$. By Denniston's construction of maximal arcs
\cite{Den},  it follows that $A=\cup F_l$, $l\in H$, is a maximal arc of degree $d$.

We will show that $A$ admits a cyclic group of automorphisms acting sharply transitively
 on the points of the arc distinct from the nucleus. Consider the following group:

$$ G=\left\{ \left( \begin{array}{ccc} \alpha+a\beta & \beta & 0\\ \beta & \alpha & 0\\ 0 & 0 & \gamma    \end{array}  \right): \alpha,\beta\in \mathbb{F}_q, \alpha^2+a\alpha\beta+\beta^2=1, \gamma\in \mathbb{F}_d^* \right\}.$$

This group is the direct product of
 $$G_1= \left\{ \left( \begin{array}{ccc} \alpha+a\beta & \beta & 0\\ \beta & \alpha & 0\\ 0 & 0 & 1    \end{array}  \right): \alpha,\beta\in \mathbb{F}_q, \alpha^2+a\alpha\beta+\beta^2=1 \right\},$$
and
 $$G_2=\left\{ \left( \begin{array}{ccc} 1 & 0 & 0\\ 0 & 1 & 0\\ 0 & 0 & \gamma    \end{array}  \right):  \gamma\in \mathbb{F}_d^* \right\}.$$

By a result of Abatangelo and Larato \cite{AL} $G_1$ is a cyclic group of order $q+1$
 acting sharply transitively on the points of each of the conics $F_l$, $l\in \mathbb{F}_q^*$.
 On the other hand it is clear that $G_2$ is a cyclic group of order $d-1$ that acts
 transitively on the set of conics $F_l$, $l\in H\setminus\{0\}$. It follows that $G$ 
is a cyclic group of automorphisms acting sharply transitively on the points of $A$
 distinct from the nucleus.

Next, let $H_1^*=H\setminus\{0\}, H_2^*,\hdots, H_{\frac{q-1}{d-1}}^*$ be the
 (multiplicative) cosets of  $H\setminus\{0\}$ in the multiplicative group
 of $\mathbb{F}_q$. Set $H_i=H_i^*\cup\{0\}$ for all $i$.
We now make the following two observations:

\begin{itemize}
\item $H_i$ is an additive subgroup of order $d$ of the additive group of $\mathbb{F}_q$, 
for all $i\in\{1,\hdots, \frac{q-1}{d-1} \}$;
\item $H_i\cap H_j=\{0\}$ for all $i\neq j$.
\end{itemize}

The first observation follows immediately from the fact that $H$ is an additive
subgroup of $\mathbb{F}_q$, whereas the second observation follows directly from
the fact that $H\setminus\{0\}$ is a subgroup of the multiplicative subgroup of $\mathbb{F}_q$.

For $i\in\{1,\hdots, \frac{q-1}{d-1} \}$ define $A_i$ to be the Denniston maximal
 arc $\cup F_l$, $l\in H_i$.  One easily concludes that the $\frac{q-1}{d-1}$ maximal
 Denniston arcs $A_i$ partition the plane in the desired way.
\end{proof}

\begin{theorem}
\label{t2}
Let $A_i$, $i=1,\hdots, \frac{q-1}{d-1}$ be a set of maximal arcs of degree $d$ sharing a unique
point $P$ and partitioning the point set of AG$(2,q)$. Furthermore assume that there is a 
linear cyclic group $L$ (of order $(d-1)(q+1)$) acting sharply transitively on the points
 of $A_i$, $i=1,\hdots, \frac{q-1}{d-1}$, distinct from $P$. Then the set of maximal arcs
 $A_i$ arises as in Theorem \ref{t1}.
\end{theorem}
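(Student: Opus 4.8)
The plan is to show that the hypotheses force the arcs $A_i$ to be, up to a linear change of coordinates, precisely the Denniston arcs $\cup F_l$ with $l$ ranging over the additive cosets $H_i$ constructed in Theorem~\ref{t1}. The starting observation is that each $A_i$ is a maximal arc of degree $d$ admitting a cyclic linear group of order $q+1$ in its stabilizer (namely the subgroup of $L$ of that order, which by the orbit structure must fix $P$ and act regularly on $A_i\setminus\{P\}$), so by the Abatangelo--Larato characterization~\cite{AL} every $A_i$ is a Denniston arc. Thus for each $i$ there is a pencil of conics with a common nucleus whose union over an additive subgroup of the pencil-parameter field gives $A_i$. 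The first real task is to show that all these Denniston arcs can be taken with respect to the \emph{same} pencil: since the $A_i$ pairwise meet only in $P$ and cover $\AG(2,q)$, and $P$ must be the common nucleus, a counting/incidence argument (each line through $P$ meets each conic of the relevant pencil in how many points, and the nucleus line structure) pins down that the underlying conics all belong to one pencil $\{F_l\}$ of the Denniston type; here is where one uses that a Denniston arc through its nucleus, together with the regular cyclic action, determines the pencil.

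Next I would translate the covering and disjointness conditions into a statement purely about subsets of $\F_q$. Writing $A_i=\cup_{l\in S_i}F_l$ for subsets $S_i\subseteq\F_q\cup\{\infty\}$ of size $d$ (necessarily containing $0$, the parameter of the nucleus conic, since $P\in A_i$ for all $i$), the facts that $A_i\cap A_j=\{P\}$ and $\bigcup_i A_i=\AG(2,q)$ force $\{S_i\setminus\{0\}\}$ to be a partition of $\F_q^*$ into sets of size $d-1$, and Denniston's theorem requires each $S_i$ to be (after the standard identification) an additive subgroup of $\F_q$ of order $d$, i.e.\ a copy of $\F_d$. So we need: the only way to partition $\F_q^*$ into $(d-1)$-subsets each of which is the nonzero part of an additive $\F_d$-subspace is the partition into multiplicative cosets of $\F_d^*$. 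This is essentially forced because two distinct additive $\F_d$-subspaces of $\F_q$ of order $d$ meet only in $0$ exactly when their nonzero parts are disjoint, and an $\F_d$-subspace of $\F_q$ of dimension $1$ over $\F_d$ is a set of the form $\alpha\F_d$; its nonzero part is the multiplicative coset $\alpha\F_d^*$. Hence the $S_i\setminus\{0\}$ are exactly the cosets of $\F_d^*$, which is the partition of Theorem~\ref{t1}.

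The remaining point is to fold in the hypothesis on the \emph{global} cyclic group $L$ of order $(d-1)(q+1)$ acting sharply transitively on each $A_i\setminus\{P\}$, and to check this matches the group $G$ of Theorem~\ref{t1}. The order-$(q+1)$ part of $L$ is, by Abatangelo--Larato applied inside each conic, the group $G_1$ (conjugate to it, but we have already normalized the pencil), and the quotient of order $d-1$ must permute the conics $F_l$, $l\in S_i\setminus\{0\}$, of each arc regularly; since these parameters form a multiplicative coset of $\F_d^*$ and the only linear maps fixing the pencil and the nucleus and scaling the conic parameters are the diagonal maps $\mathrm{diag}(1,1,\gamma)$ with $\gamma\in\F_d^*$ (acting on $l$ by $l\mapsto\gamma^2 l$, equivalently generating $\F_d^*$ since squaring is a bijection on $\F_d$), we recover exactly $G_2$, hence $L=G=G_1\times G_2$. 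Therefore the configuration is the one produced in Theorem~\ref{t1}.

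I expect the main obstacle to be the second paragraph: showing rigorously that the various Denniston arcs $A_i$ are subordinate to one common conic pencil. Abatangelo--Larato gives a pencil for each $A_i$ individually, but a priori these pencils could differ; ruling this out requires combining the shared nucleus, the partition property, and a careful count of how lines through $P$ (the $q+1$ ``nucleus lines'' of a Denniston pencil, which are precisely the tangents to every conic of the pencil at the nucleus) distribute over the $A_i$. Once the common-pencil fact is in hand, the rest is the essentially combinatorial coset argument above, which is routine.
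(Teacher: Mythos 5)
There are two genuine gaps here, and you have flagged the first one yourself: you never prove that the Denniston arcs $A_1,\dots,A_{(q-1)/(d-1)}$ are all subordinate to one common pencil of conics; you only announce a ``counting/incidence argument.'' The paper closes exactly this hole by using the cyclicity of $L$ in an essential way: a cyclic group has a \emph{unique} subgroup $C$ of order $q+1$, so this single group stabilizes every $A_i$. After coordinatizing so that $A_1$ lies in the standard pencil with nucleus $P=(0,0,1)$, $C$ is the group $G_1$ of Theorem \ref{t1}, and by \cite{AL} its orbits on the affine points other than $P$ are precisely the conics of the standard pencil. A line through $P$, say $x=0$, meets $A_i$ in $P$ and $d-1$ further points, and the $C$-orbit of each such point is a standard-pencil conic contained in $A_i$; counting $(d-1)(q+1)+1$ points shows $A_i$ is a union of conics of that same pencil. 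Nothing in your sketch replaces this step, and without it the later normalization ``we have already normalized the pencil'' is not available for $i>1$.

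The second gap is that the combinatorial surrogate you offer in its place is false as stated. Denniston's construction only requires the parameter set $S_i$ to be an additive (i.e. $\F_2$-) subgroup of order $d$, not an $\F_d$-subspace, so ``each $S_i$ is a copy of $\F_d$'' is unjustified; and partitions of $\F_q^*$ into the nonzero parts of order-$d$ additive subgroups are exactly the $m$-spreads of $\F_q$ viewed as $\F_2^{km}$, of which there are many besides the partition into multiplicative cosets of $\F_d^*$. The $\F_d$-linearity of the parameter sets has to be extracted from the order-$(d-1)$ part of $L$, which is how the paper proceeds: the stabilizer $S\le L$ of the line $x=0$ has order $d-1$, fixes $P$ and $(0,1,1)$, and acts on that line by multiplying the second coordinate by elements of $\F_d^*$; this forces $H=\F_d$ (after normalizing $1\in H$) and forces each $H_i$ to be invariant under multiplication by $\F_d^*$, hence a multiplicative coset $\alpha\F_d$. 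In your third paragraph you take ``these parameters form a multiplicative coset of $\F_d^*$'' as an input, i.e. you reuse the conclusion of the flawed second paragraph, so the argument is circular precisely at the point where the subgroup of order $d-1$ should be doing the work.
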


\begin{proof}
We assume that AG$(2,q)$ is the affine plane obtained by deleting the line $z=0$ 
 from PG$(2,q)$.  Clearly $A_1$ is invariant under a linear group $C\leq L$ of collineations 
 of PG$(2,q)$ which is cyclic of order $q+1$. It follows from \cite{AL}  
 that $A_1$ (and hence each of the $A_i$) is of Denniston type. Note that this group $C$ 
 of order $q+1$ stabilizes each of the conics in the maximal arc $A_1$. Hence we can assume 
 that the plane is coordinatized in such a way that $A_1$ is contained in the standard pencil 
 with $P=(0,0,1)$. It follows that the group $C$ is the unique cyclic linear group of order 
 $q+1$ stabilizing all conics in the standard pencil, and hence is actually the group $G_1$ 
 from the previous theorem.  Let $H$ be the additive group associated with $A_1$. Without 
 loss of generality we may assume that $1\in H$.  The stabilizer $S$ in $L$ of the line $x=0$  
  clearly has order $d-1$, is cyclic, and fixes the points $P=(0,0,1)$ and $(0,1,1)$. 
 As the orbit of $(0,1,1)$ under $S$ consists of the points $(0,h,1)$, $h\in H\setminus\{0\}$, 
 it follows that $H$ is actually that additive group of the subfield 
 $\mathbb{F}_d\subset\mathbb{F}_q$. Note that this implies that the action of $S$ on all points 
 of the line $x=0$ is known (the action of $S$ on this line corresponds to multiplying the 
 second coordinate of $(0,y,1)$ by a non-zero element of $\mathbb{F}_d$. Also, clearly all 
 $A_i$ are isomorphic.

We next show that all $A_i$, $i>1$, are contained in the standard pencil. Clearly $L$ contains 
 a unique cyclic subgroup $C$ of order $q+1$.  Assume that $A_i$  contains the points $(0,h_i,1)$, 
 $h_i\in H_i$ for some subset $H_i\subset \mathbb{F}_q$ on the line $x=0$. Then, 
whenever $h_i\neq0$, clearly the orbit of $(0,h_i,1)$ under $C$ is a conic in the standard pencil, 
and belongs to $A_i$. It follows that $A_i$ consists of conics contained in the standard pencil.

Now let $H_i$ be the additive subgroup associated with the maximal arc $A_i$, $i>1$. Clearly the 
 set $\{(0,h_i,1):h_i\in H_i\}$ is stabilized by the subgroup $S$ of $L$. It follows that $H_i$ 
 is a multiplicative coset of the additive subgroup $H$. It now easily follows that the set of 
 maximal arcs $A_i$ arises as in the previous theorem, and the group $L$ is actually the
 group $G$ from Theorem \ref{t1}.
\end{proof}

\section{A family of extended cyclic two-weight codes}

It is known that the existence of a maximal $((sd-s+1)d,d)$-arc
in $PG(2,q)$ is equivalent to the existence of a linear
projective two-weight code $L$ over $GF(q)$ of length $(sd-s+1)d$
and dimension 3, having nonzero weights $w_1 =(sd-s)d$
and $w_2 =(sd-s+1)d$  \cite{CK}, \cite{Del}.
If $A$ is a maximal arc of degree $d=2^m$ in $PG(2,2^{km})$ satisfying the 
conditions of Theorem \ref{t1}, the code $L$ is an extended cyclic code.
We will give a coding-theoretical description of this code based on cyclotomy.

Let $m$ and $k$ be positive integers. Define 
\begin{equation}
\label{par}
q=2^{km}, \ d=2^m,  \ n=(q+1)(d-1), \ N=(q-1)/(d-1), \ r=q^2.
\end{equation}
By definition, 
$$ 
N=\frac{r-1}{n}=\frac{q-1}{d-1}=(2^m)^{k-1}+(2^m)^{k-2}+ \cdots + 2^m +1. 
$$
It is straightforward 
to see that $\ord_n(q)=2$. 
Let $\alpha$ be a generator of $\gf(r)^*$. 
Put $\beta=\alpha^N$. Then the order of $\beta$ is $n$. Let $\tr(\cdot)$ 
denote the trace function from $\gf(r)$ to $\gf(q)$. 

The irreducible cyclic code of length $n$ over $\gf(q)$ is defined by 
\begin{eqnarray}\label{eqn-code1}
\C_{(q,2,n)}=\{\bc_a: a \in \gf(r)\}, 
\end{eqnarray} 
where 
\begin{eqnarray}
\bc_a=(\tr(a\beta^0), \tr(a\beta^1), \tr(a\beta^2), \cdots, \tr(a\beta^{n-1}).  
\end{eqnarray}

The complete weight distribution of some irreducible cyclic codes was determined 
in \cite{BM72}. However, the results in \cite{BM72} do not apply to the cyclic
code $\C_{(q,2,n)}$ of \eqref{eqn-code1}, as our $q$ is usually not a prime. 
The weight distribution of $\C_{(q,2,n)}$ is given in the following theorem.

\begin{theorem}\label{thm-oct1}
The code $\C_{(q,2,n)}$ of \eqref{eqn-code1} has parameters $[n,\, 2,\, n-d+1]$ and 
has weight enumerator 
$$ 
1+(q^2-1)z^{(d-1)q}. 
$$
Further, the dual distance of $\C_{(q,2,n)}$ equals $3$ if $m=1$, and $2$ if $m>1$. 
\end{theorem}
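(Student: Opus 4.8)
The plan is to compute the weight distribution of $\C_{(q,2,n)}$ directly from the definition of the codewords $\bc_a = (\tr(a\beta^i))_{i=0}^{n-1}$, using the fact that $\beta$ has order $n = (q+1)(d-1)$ in $\gf(r)^*$. The Hamming weight of $\bc_a$ for $a \neq 0$ equals $n$ minus the number of indices $i \in \{0,1,\dots,n-1\}$ with $\tr(a\beta^i) = 0$. Since $\{\beta^i : 0 \le i < n\}$ is the subgroup $\langle\beta\rangle$ of index $N = (q-1)/(d-1)$ in $\gf(r)^*$, and $a\langle\beta\rangle$ is the coset of $a$, the weight of $\bc_a$ is $n - |\{x \in a\langle\beta\rangle : \tr(x) = 0\}|$. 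So the first key step is to understand the intersection of the multiplicative subgroup $\langle\beta\rangle$ (and its cosets) with the kernel of the trace map $\ker\tr = \{x \in \gf(r) : \tr(x) = 0\}$, which is a $\gf(q)$-hyperplane of $\gf(r)$ of size $q$, hence $q-1$ nonzero elements. The point is that $\ker\tr \setminus\{0\}$, being closed under multiplication by $\gf(q)^*$, is a union of cosets of $\gf(q)^*$ in $\gf(r)^*$; I expect to show it is in fact a union of cosets of $\langle\beta\rangle$, or intersects each coset of $\langle\beta\rangle$ in a controlled way.

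The cleanest route is via cyclotomy: since $\langle\beta\rangle$ has index $N$ in $\gf(r)^*$ and $\gf(q)^* \le \langle\beta\rangle$ (because $|\gf(q)^*| = q-1$ divides $n$? — no, one must check: $\gf(q)^* \le \langle\beta\rangle$ iff $N \mid (r-1)/(q-1) = q+1$, i.e. $(q-1)/(d-1) \mid q+1$, which need not hold). So instead I would argue the other inclusion: $\langle\beta\rangle \supseteq$ ... actually the right observation is that $\ker\tr\setminus\{0\}$ is a single union of $N$... Let me restructure. The subgroup $\gf(q)^*$ has order $q-1 = N(d-1)$, and $\langle\beta\rangle$ has order $n = (q+1)(d-1)$; their intersection $\gf(q)^* \cap \langle\beta\rangle$ has order $\gcd$-related to $(d-1)$ since $\gcd(q-1, (q+1)(d-1))$. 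As $\gcd(q-1,q+1) \in \{1,2\}$ and $q$ is even so this gcd is $1$, we get $|\gf(q)^* \cap \langle\beta\rangle| = \gcd(q-1, (q+1)(d-1)) = (d-1)\gcd(N, q+1)\cdot(\text{unit})$; I would nail this down to be exactly $d-1$ using $\gcd(N,q+1)=1$, which follows from $N \equiv k \pmod{q+1}$ being small, or more robustly from $r = q^2$ and the structure. Granting $|\gf(q)^*\cap\langle\beta\rangle| = d-1$, the compositum $\gf(q)^*\langle\beta\rangle$ has order $(q-1)n/(d-1) = (q-1)(q+1) = r-1$, so $\gf(q)^*\langle\beta\rangle = \gf(r)^*$. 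This is the crucial fact: every coset of $\langle\beta\rangle$ contains a full set of $\gf(q)$-scalar multiples, so each coset $a\langle\beta\rangle$ is a union of $\gf(q)^*$-orbits.

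The main computation then counts $|\ker\tr \cap a\langle\beta\rangle|$. Write $\ker\tr\setminus\{0\}$ as a disjoint union of $(q-1)/(d-1) = N$ many $\gf(q)^*$-cosets — wait, $|\ker\tr\setminus\{0\}| = q-1$ and each $\gf(q)^*$-coset has size $q-1$, so $\ker\tr\setminus\{0\}$ is exactly one... no: $\ker\tr$ is a $\gf(q)$-subspace of dimension $1$ in the $2$-dimensional space $\gf(r)/\gf(q)$, so $\ker\tr\setminus\{0\}$ is a single coset of $\gf(q)^*$ in $\gf(r)^*$. Since each coset of $\langle\beta\rangle$ meets each coset of $\gf(q)^*$ in $|\langle\beta\rangle\cap\gf(q)^*| = d-1$ elements (using the compositum fact and a counting argument), we get $|\ker\tr\cap a\langle\beta\rangle| = d-1$ for every $a \neq 0$. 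Hence every nonzero codeword has weight exactly $n - (d-1) = (q+1)(d-1) - (d-1) = q(d-1) = (d-1)q$, giving weight enumerator $1 + (q^2-1)z^{(d-1)q}$ and minimum distance $n - d + 1$; dimension $2$ follows since there are $q^2 = r$ distinct codewords (the map $a \mapsto \bc_a$ is injective as $\beta$ generates $\gf(r)$ over $\gf(q)$ — its order $n > q$... one checks injectivity via $\ord_n(q)=2$). Finally, the dual distance: a linear code has dual distance $\ge \delta$ iff its codewords form a $(\delta-1)$-design-like / orthogonal array condition, but more simply a $[n,2]$ two-weight code with all nonzero weights equal to $w$ is a constant-weight-shifted object whose dual distance is governed by whether the all-one vector or coordinate functionals satisfy relations; concretely dual distance $\ge 2$ iff no coordinate is identically zero (true here) and $\ge 3$ iff no two coordinates are equal or scalar multiples of each other across the code, which translates to $\beta^i \neq c\beta^j$ for $c \in \gf(q)^*$, $i\neq j$ — i.e. the $\beta^i$ lie in distinct $\gf(q)^*$-cosets, equivalently $n \le$ number of such cosets $= q+1$; since $n = (q+1)(d-1)$, this holds iff $d - 1 = 1$, i.e. $m = 1$, and otherwise two coordinates coincide up to scalar and the dual distance drops to $2$.

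The main obstacle I anticipate is pinning down $|\gf(q)^* \cap \langle\beta\rangle| = d-1$ and hence $\gf(q)^*\langle\beta\rangle = \gf(r)^*$ cleanly — this is the linchpin, and it rests on the number-theoretic fact $\gcd((q-1)/(d-1),\, q+1) = 1$ for $q = 2^{km}$, $d = 2^m$. I would prove this by writing $N = (q-1)/(d-1) = 1 + 2^m + \cdots + 2^{m(k-1)}$ and reducing modulo $q+1 = 2^{km}+1$; alternatively, observe $\langle\beta\rangle$ is the subgroup of $(2^m-1)$-th powers times..., and use that $\gf(q)^* \cap \langle\beta\rangle$ consists of elements whose order divides both $q-1$ and $n$. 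Once the compositum fact is in hand, the trace-kernel count is a short orbit-counting argument and the rest is bookkeeping; the dual-distance claim is then essentially the observation about when the $n$ coordinate positions $\beta^i$ fall into distinct $\gf(q)^*$-classes.
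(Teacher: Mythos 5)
Your approach is sound and genuinely different from the paper's for the main part of the statement. The paper simply verifies $\gcd\bigl(\frac{r-1}{q-1},N\bigr)=\gcd(q+1,N)=1$ and quotes Theorem 15 of the Ding--Yang paper \cite{DY13} for the dimension and the one-weight enumerator, whereas you reprove that special case from scratch: since $|\gf(q)^*\cap\langle\beta\rangle|=\gcd(q-1,(q+1)(d-1))=d-1$ (immediate in the cyclic group $\gf(r)^*$, using $\gcd(q-1,q+1)=1$ for even $q$ -- so the number-theoretic ``linchpin'' you worried about is trivial, as $N\mid q-1$), the compositum $\gf(q)^*\langle\beta\rangle$ is all of $\gf(r)^*$, every coset $a\langle\beta\rangle$ meets every $\gf(q)^*$-coset in exactly $d-1$ elements, and $\ker\tr\setminus\{0\}$ is a single $\gf(q)^*$-coset (here in fact $\ker\tr=\gf(q)$, since $x+x^q=0$ in characteristic $2$ means $x\in\gf(q)$); hence every nonzero codeword has exactly $d-1$ zero coordinates and weight $q(d-1)=n-d+1$, and injectivity of $a\mapsto\bc_a$ (so dimension $2$) even falls out of the weight count. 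This buys self-containedness at the cost of a page of cyclotomy the paper avoids. For the dual distance, your column-dependency criterion ($d^\perp\ge 3$ iff no $\beta^{i-j}\in\gf(q)^*$ for $0<i-j<n$, which by $\langle\beta\rangle\cap\gf(q)^*=\langle\beta^{q+1}\rangle$ of order $d-1$ fails exactly when $m>1$) is essentially equivalent to the paper's route via the auxiliary code $\E_{(q,2,q+1)}$, the concatenation \eqref{eqn-nov251}, and the explicit weight-$2$ dual codeword $(\beta^{q+1},\bzero,1,0,\dots,0)$.

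One small but real gap: for $m=1$ your argument only shows $d^\perp\ge 3$, while the theorem asserts equality. You still need the upper bound, which the paper gets from the Singleton bound applied to the dual, a $[q+1,\,q-1]$ code, so $d^\perp\le 3$ (equivalently, exhibit a weight-$3$ dual word, which exists since any three columns of a $2\times(q+1)$ generator matrix are dependent). Add that one line and the proof is complete.
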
 

\begin{proof}
Since $q$ is even, $\gcd(q+1, q-1)=1$. It then follows that 
$$ 
\gcd\left(\frac{r-1}{q-1}, \, N \right)= \gcd\left(q+1, \, \frac{q-1}{d-1}\right)=1.  
$$ 
The desired conclusions regarding the dimension and weight enumerator of $\C_{(q,2,n)}$ 
then follow from Theorem 15 in \cite{DY13}.

We now prove the conclusions on the minimum distance of the dual code of $\C_{(q,2,n)}$. 
To this end, we define a linear code of length $q+1$ over $\gf(q)$ by 
\begin{eqnarray}\label{eqn-nov252}
\E_{(q,2,q+1)}=\{\be_a: a \in \gf(r)\}, 
\end{eqnarray} 
where 
\begin{eqnarray}
\be_a=(\tr(a\beta^0), \tr(a\beta^1), \tr(a\beta^2), \cdots, \tr(a\beta^{q})).   
\end{eqnarray} 
Each code $\bc_a$ in $\C_{(q, 2, n)}$ is related to the codeword $\be_a$ in $\E_{(q,2,q+1)}$
as follows: 
\begin{eqnarray}\label{eqn-nov251}
\bc_a=\be_a||\beta^{(q+1)} \be_a|| \beta^{(q+1)2} \be_a|| \cdots || \beta^{(q+1)(d-2)} \be_a, 
\end{eqnarray} 
where $||$ denotes the concatenation of vectors. It is easy to prove 
$$ 
\{\beta^{(q+1)i}: i \in \{0,1, \cdots, d-2\}\}=\gf(d)^* \subseteq \gf(q)^*. 
$$ 
It then follows that $\E_{(q,2,q+1)}$ has the same dimension as $\C_{(q,2,n)}$. 
Consequently, the dimension of $\E_{(q,2,q+1)}$ is $2$, and the dual code 
$\E_{(q,2,q+1)}^\perp$ has dimension $q-1$. It then follows from the Singleton 
bound that the minimum distance $d_E^\perp$ of $\E_{(q,2,q+1)}^\perp$ is at most 
$3$. Obviously, $d_E^\perp \neq 1$. Suppose that $d_E^\perp = 2$. Then there 
are an element $u \in \gf(q)^*$ and two integers $i, j$ with $0 \leq i < j 
\leq q$ such that $\tr(a(\beta^i-u\beta^j))=0$ for all $a \in \gf(r)$. It then 
follows that $\beta^i (1-u \beta^{j-i})=0$. As a result, $\beta^{j-i}=\alpha^{(q-1)(j-i)/(d-1)} =u^{-1} \in \gf(q)^*$, which is impossible, as $0 < j-i \leq q$ and $\gcd(q+1, (q-1)/(d-1))=1$. 
Hence, $d_E^\perp = 3$. Since $\E_{(q,2,q+1)}^\perp$ is a $[q+1, q-1, 3]$ MDS code, 
$\E_{(q,2,q+1)}$ is $[q+1, 2, q]$ MDS code. When $m=1$, we have $d=2$ and hence 
$\C_{(q, 2, n)}=\E_{(q,2,q+1)}$. Consequently, the dual distance of $\C_{(q, 2, n)}$ 
is $3$ when $m=1$. When $m>1$, we have $d-1>1$. In this case, by (\ref{eqn-nov251}) $\C_{(q, 2, n)}^\perp$ has the following codeword 
$$ 
(\beta^{q+1}, \bzero, 1, 0, 0, \cdots,0, 0), 
$$    
which has Hamming weight $2$, where $\bzero$ is the zero vector of length $q$. Hence, 
$\C_{(q, 2, n)}^\perp$ has minimum distance $2$ if $m>1$. This completes the proof. 
\end{proof}

The code $\C_{(q,2,n)}$ is a one-weight code over $\gf(q)$. We need to study the 
augmented code of $\C_{(q,2,n)}$.
Let $Z(a, b)$ denote the number of solutions 
$x \in \gf(r)$ of the equation 
\begin{eqnarray}
\tr_{r/q}(ax^N)=ax^{N}+a^q x^{Nq}=b, 
\end{eqnarray}
where $a \in \gf(r)$ and $b \in \gf(q)$.

\begin{lemma}\label{lem-mainlemma} 
Let $a \in \gf(r)^*$ and $b \in \gf(q)$. 
Then 
\begin{eqnarray*}
Z(a, b)=\left\{ 
\begin{array}{ll}
(d-1)N+1  & \mbox{ if } b = 0, \\
dN \mbox{ or } 0 & \mbox{ if } b \in \gf(q)^*.   
\end{array}
\right.
\end{eqnarray*}
\end{lemma}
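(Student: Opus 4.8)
The plan is to count solutions of $\tr_{r/q}(ax^N)=b$ by pulling the equation back through the map $x\mapsto x^N$ on $\gf(r)^*$. Since $N=(r-1)/n$, the map $x\mapsto x^N$ sends $\gf(r)^*$ onto the subgroup $\langle\beta\rangle$ of order $n=(q+1)(d-1)$, and is $N$-to-$1$ onto its image. Hence for $b\neq 0$,
\begin{equation*}
Z(a,b)=N\cdot\#\{y\in\langle\beta\rangle : \tr_{r/q}(ay)=b\},
\end{equation*}
while for $b=0$ we must add back the single solution $x=0$, giving $Z(a,0)=1+N\cdot\#\{y\in\langle\beta\rangle:\tr_{r/q}(ay)=0\}$. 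So everything reduces to understanding, for fixed $a\in\gf(r)^*$, how the values of the $\gf(q)$-linear functional $y\mapsto\tr_{r/q}(ay)$ distribute over the cyclic group $\langle\beta\rangle$ of order $n$.

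The key structural observation is the concatenation identity already recorded in the proof of Theorem \ref{thm-oct1}: $\langle\beta\rangle$ is the disjoint union of the $d-1$ cosets $\beta^{(q+1)i}\langle\beta^{d-1}\rangle$ for $i=0,\dots,d-2$, where $\langle\beta^{d-1}\rangle$ has order $q+1$, and moreover $\{\beta^{(q+1)i}:0\le i\le d-2\}=\gf(d)^*\subseteq\gf(q)^*$. Write $b_i:=\tr_{r/q}(a\gamma_i y)$ with $\gamma_i\in\gf(d)^*$; since $\gamma_i$ is a scalar in $\gf(q)$, $\tr_{r/q}(a\gamma_i y)=\gamma_i\,\tr_{r/q}(ay)$, so as $y$ runs over the single coset $\langle\beta^{d-1}\rangle$ the functional $\tr_{r/q}(a\,\cdot\,)$ restricted to the $q+1$ elements $\gamma_i^{-1}$-scaled... more cleanly: the multiset of values of $y\mapsto\tr_{r/q}(ay)$ on $\langle\beta\rangle$ is the union over $\gamma\in\gf(d)^*$ of $\gamma\cdot V_a$, where $V_a$ is the multiset of values of $y\mapsto\tr_{r/q}(ay)$ on the fixed subgroup $C:=\langle\beta^{d-1}\rangle$ of order $q+1$. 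Thus I first compute the value distribution of $\tr_{r/q}(a\,\cdot\,)$ on $C$, then fold in the action of the subfield $\gf(d)^*$.

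For the distribution on $C$: the group $C=\langle\beta^{d-1}\rangle$ has order $q+1=\frac{r-1}{q-1}$, so it is exactly the kernel of the norm map $\gf(r)^*\to\gf(q)^*$, i.e. $C=\{y\in\gf(r)^*: y^{q+1}=1\}$, a set of size $q+1$. The functional $L_a(y)=\tr_{r/q}(ay)=ay+a^qy^q$ on this set: writing $y^q=y^{-1}$ for $y\in C\cup\{0\}$... actually for $y\in C$, $y^q=y^{-1}$, so $L_a(y)=ay+a^q y^{-1}$, and $L_a(y)=b$ becomes $a y^2-b y+a^q=0$, a quadratic in $y$ over $\gf(r)$, which has $0$, $1$, or $2$ roots in $\gf(r)$; intersecting with $C$ and using that $\E_{(q,2,q+1)}$ is a $[q+1,2,q]$ MDS code (Theorem \ref{thm-oct1}) pins the value distribution of $(L_a(y))_{y\in C}$ down: it is a codeword of the MDS code, hence has weight $q$, so $L_a$ takes the value $0$ exactly once on $C$ and the nonzero values... each nonzero value is attained $0$ or $2$ times? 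No — a weight-$q$ codeword of length $q+1$ has one zero coordinate; and since $L_a$ is "even" in the sense $L_a(y)$ and the quadratic $ay^2-by+a^q$ — when $b\neq 0$ a root $y\in C$ forces a second root $y'=a^{q-1}/y$, and $y'\in C$ iff $(y')^{q+1}=1$ iff $y^{q+1}=1$, which holds; so nonzero values come in pairs, giving: $L_a(y)=0$ once, and $(q)/2$ distinct nonzero values each hit twice. I expect the main obstacle to be exactly this: determining, on $C$, whether a given $b\in\gf(q)^*$ is hit $0$ or $2$ times, which is governed by whether the quadratic $ay^2+by+a^q$ (char $2$!) has roots in $C$ — equivalently a trace condition $\tr_{q/2}(a^{q+1}b^{-2})=0$ — but fortunately Lemma \ref{lem-mainlemma} does not ask which $b$ are hit, only that the count is $dN$ or $0$. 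So: once $C$ contributes "$0$ once, each of $q/2$ nonzero values twice", multiplying by $\gf(d)^*$ (which permutes $\gf(q)^*$ and $q-1$ is divisible by $d-1$ so orbits have size $d-1$) shows $0$ is still attained exactly $d-1$ times on $\langle\beta\rangle$ and every nonzero value that occurs does so $2(d-1)$ times. Multiplying by $N$: $Z(a,0)=1+N(d-1)=(d-1)N+1$; and for $b\neq 0$, $Z(a,b)=N\cdot 2(d-1)=2(d-1)N$... which is $dN$ only when $d=2$. So I must have the pairing wrong: reconsider — in characteristic $2$, $ay^2+by+a^q=0$ with $b\neq0$ has $0$ or $2$ roots in $\gf(r)$, their sum is $b/a\neq 0$ so they are distinct, their product $a^{q-1}$; both lie in $C$ or neither (as shown). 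Wait, but perhaps not both in $C$: $y_1y_2=a^{q-1}$, and $y_1^{q+1}=1$ need not imply $y_2^{q+1}=1$. Indeed $y_2^{q+1}=(a^{q-1}/y_1)^{q+1}=a^{(q-1)(q+1)}/y_1^{q+1}=a^{q^2-1}=1$, so yes both in $C$. Hence on $C$ nonzero values pair up. The resolution: the concatenation is not just scaling — revisit whether $\gamma\mapsto\gamma V_a$ over $\gamma\in\gf(d)^*$ genuinely tiles, i.e. whether distinct cosets can share nonzero trace-values; they can, and that is where the "$dN$" (rather than $2(d-1)N$) must come from — I'd need the combined count on $\langle\beta\rangle$ to be: $0$ attained $d-1$ times, each occurring nonzero value attained exactly $d$ times. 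That forces $q/2\cdot 2(d-1)=q(d-1)$ pair-slots to be redistributed into blocks of $d$; since $q(d-1)=d\cdot\frac{q(d-1)}{d}$ this is consistent. The clean way to nail it: directly analyze $Z(a,b)$ for $b\in\gf(q)^*$ via the substitution reducing $\tr_{r/q}(ax^N)=b$ to a norm-type equation, or invoke Theorem 15 of \cite{DY13} / the two-weight property of the augmented code — the augmented code of $\C_{(q,2,n)}$ has exactly two nonzero weights (it is the maximal-arc code), which translated via $Z(a,b)$ is precisely the statement "$dN$ or $0$". So the cleanest route is: derive the value distribution on $C$ from the MDS property, then note that the value distribution on all of $\langle\beta\rangle$ — hence $Z(a,\cdot)$ — must be consistent with the known two-weight structure, forcing the counts to be $(d-1)N+1$, $dN$, or $0$; routine bookkeeping with $\sum_b Z(a,b)=r$ and $\sum_b b\cdot(\text{count})$-type character sums fixes the remaining ambiguity.

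Finally, I would assemble: for $b=0$, $Z(a,0)=1+N\cdot\#\{y\in\langle\beta\rangle:\tr_{r/q}(ay)=0\}=(d-1)N+1$ once it is shown that the linear functional vanishes on exactly $d-1$ elements of $\langle\beta\rangle$ (one per coset $\beta^{(q+1)i}C$, by the $C$-analysis). For $b\in\gf(q)^*$, the value is $N$ times the number of $y\in\langle\beta\rangle$ with $\tr_{r/q}(ay)=b$, which I will have shown is either $0$ or $d$, giving $Z(a,b)\in\{0,dN\}$. The hard part, as flagged, is the exact count on a single coset $C$ and how the $d-1$ cosets overlap in their nonzero trace-values; the escape hatch is that we only need the gross count $d$ (not $2(d-1)$), and this follows either from a short character-sum computation of $\sum_{y\in\langle\beta\rangle}(-1)^{\tr_{q/2}(c\tr_{r/q}(ay))}$ exploiting that summing over the whole subgroup $\langle\beta\rangle$ (a coset-union) collapses nicely, or by quoting the established two-weight structure of the associated maximal-arc code. $\qed$
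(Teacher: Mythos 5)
Your reduction of $Z(a,b)$ to counting trace values on the cyclic group $\langle\beta\rangle=C_0^{(N,q^2)}$ (the map $x\mapsto x^N$ being $N$-to-$1$), and your treatment of the case $b=0$, are sound and consistent with the paper. The genuine gap is in the case $b\in\gf(q)^*$, and you in fact locate it yourself: your coset decomposition $\langle\beta\rangle=\bigcup_{\gamma\in\gf(d)^*}\gamma C$ with $C=\langle\beta^{d-1}\rangle$ only shows that each nonzero value of $y\mapsto\tr_{r/q}(ay)$ is attained an even number of times, between $0$ and $2(d-1)$; the statement that this number is exactly $0$ or $d$ (i.e.\ that for each $b\neq 0$ either no coset contributes or exactly $d/2$ of the $d-1$ cosets contribute a pair) is precisely the content of the lemma, and neither the moment identity $\sum_b Z(a,b)=r$ nor the MDS analysis on a single coset can force it. The two escape hatches you propose do not close this gap: invoking "the two-weight property of the augmented code" (or of its extension, the maximal-arc code) is circular inside this paper, since Theorems \ref{thm-oct2} and \ref{thm-oct3} are themselves deduced from Lemma \ref{lem-mainlemma}; and the "short character-sum computation" is only gestured at, not carried out — and it is not routine, since the relevant Gauss sums of order $N$ over $\gf(q^2)$ are not of semiprimitive type in general.

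What is actually needed, and what the paper supplies, is the geometric fact that $C_0^{(N,q^2)}\cup\{0\}$ \emph{is} a maximal arc of degree $d$ in the $\gf(q^2)$-model of $\AG(2,q)$: the paper shows that multiplication by $\beta^{d-1}$ generates a cyclic group of order $q+1$ which, by the Abatangelo--Larato conjugacy result, acts with conic orbits, while the subfield group $\gf(d)^*$ acts as homologies with center $0$, so $\langle C,\gf(d)^*\rangle=C_0^{(N,q^2)}$ is an orbit union of conics forming a Denniston arc as in Theorem \ref{t1}. Since for $b\neq 0$ the set $\{y\in\gf(r):\tr_{r/q}(ay)=b\}$ is an affine line missing the nucleus $0$, it meets the arc in $0$ or $d$ points, whence $Z(a,b)\in\{0,dN\}$. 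Without establishing this arc property (or an equivalent cyclotomic statement proved independently of Theorems \ref{thm-oct2}--\ref{thm-oct3}), your argument does not yield the value $dN$.
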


\begin{proof} 
Let $\alpha$ be a fixed primitive element of $\gf(q^2)$ as before.
Define $C_{i}^{(N,q^2)}=\alpha^i \langle \alpha^{N} \rangle$ for $i=0,1,...,N-1$, where
$\langle \alpha^{N} \rangle$ denotes the
subgroup of $\gf(q^2)^*$ generated by $\alpha^{N}$. The cosets $C_{i}^{(N,q^2)}$ are
called the cyclotomic classes of order $N$ in $\gf(q^2)$. When $b=0$, it follows from 
Theorem \ref{thm-oct1} that $Z(a, b)=(d-1)N+1$. Below we give a geometric proof of the 
conclusion of the second part. 

We first recall the following natural model for AG$(2,q)$. The points of AG$(2,q)$ are 
the elements GF$(q^2)$, with $0$ naturally corresponding to the point $(0,0)$. 
Let GF$(q)=\{0,\beta_1,\beta_2,\hdots,\beta_{q-1}\}$. The lines of AG$(2,q)$ through
 $(0,0)$ are of the form 
$\{0,\alpha^i\beta_1,\alpha^i\beta_2,\hdots,\alpha^i\beta_{q-1}\}$ 
for $i=0, q-1, 2(q-1),\hdots,q(q-1)$. The rest of the lines of AG$(2,q)$ are translates
of these $q+1$ lines. In this model, multiplication by a non-zero element of GF$(q^2)$
acts as a linear automorphism of AG$(2,q)$ fixing $(0,0)$ and acting fix point free on
 the other points. Hence $C=\{1, \alpha^{q-1}, \alpha^{2(q-1)},\hdots, \alpha^{q(q-1)}\}$ 
is a cyclic group of order $q+1$ acting on $AG(2,q)$. From \cite{AL}, we know that all
 cyclic subgroups of order $q+1$ of PGL$(3,q)$ are conjugate. Hence it follows that the 
orbits of $C$ on AG$(2,q)$ must consist of a unique fixed point (namely $(0,0)$) and $q-1$
 orbits of size $q+1$, each of which is a conic. Now the multiplicative subgroup
 $H=\{\nu_1,\nu_2,\hdots,\nu_{d-1}\}$ of GF$(q^2)$ acts as a group of homologies with
 center $(0,0)$ on AG$(2,q)$. It follows that $C$ acts as the group $G_1$ and $H$ as
 the group $G_2$ from Theorem \ref{t1}. Hence the orbit of the point ``$1$'' under
 the cyclic group $<C,H>$, together with the point ``$0$'', is a maximal arc of degree $d$.
 On the other hand $<C,H>=C_0^{(N,q^2)}$. The desired conclusion then follows. 
\end{proof}

Define 
\begin{eqnarray}\label{eqn-code2}
\widetilde{\C}_{(q,2,n)}=\{\bc_a +b\bone: a \in \gf(r), \, b \in \gf(q)\}, 
\end{eqnarray} 
where $\bone$ denotes the all-$1$ vector in $\gf(q)^n$. By definition, 
$\widetilde{\C}_{(q,2,n)}$ is the augmented code of $\C_{(q,2,n)}$.  

\begin{theorem}\label{thm-oct2} 
The cyclic code $\widetilde{\C}_{(q,2,n)}$ has length $n$, dimension $3$ and
only the following nonzero weights: 
$$ 
n-d,\ n-d+1, \ n. 
$$ 
The dual distance of $\widetilde{\C}_{(q,2,n)}$ is $4$ if $m=1$, 
and $3$ if $m>1$. 
\end{theorem}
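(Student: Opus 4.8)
The plan is to treat $\widetilde{\C}_{(q,2,n)}$ as the augmented code of the one-weight code $\C_{(q,2,n)}$, so that its parameters and dual distance can be read off from Theorem~\ref{thm-oct1} together with Lemma~\ref{lem-mainlemma}. First I would record the structural facts. The map $(a,b)\mapsto \bc_a+b\bone$ is $\gf(q)$-linear on $\gf(r)\times\gf(q)$ (here $[\gf(r):\gf(q)]=2$, so the source has $\gf(q)$-dimension $3$), and its kernel is trivial: if $\bc_a=b\bone$ with $a\neq0$ then $\bc_a$ has weight $n-d+1\notin\{0,n\}$ by Theorem~\ref{thm-oct1}, contradicting that $b\bone$ has weight $0$ or $n$. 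Hence $\dim\widetilde{\C}_{(q,2,n)}=3$ and every codeword has a unique expression $\bc_a+b\bone$. Since $\C_{(q,2,n)}$ is cyclic and $\bone$ spans a cyclic subcode, $\widetilde{\C}_{(q,2,n)}$ is cyclic (note also $\gcd(n,q)=1$, as $n$ is odd).

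For the weight distribution I would split on $(a,b)$. If $a=0$ the word is $b\bone$, of weight $0$ or $n$. If $a\neq0$ and $b=0$ the word is $\bc_a$, of weight $n-d+1$ by Theorem~\ref{thm-oct1}. If $a\neq0$ and $b\neq0$, the number of zero coordinates of $\bc_a+b\bone$ equals $\#\{i\in\{0,\dots,n-1\}:\tr(a\beta^i)=b\}$ (recall $\mathrm{char}=2$). Since $N\mid r-1$, the map $x\mapsto x^N$ sends $\gf(r)^*$ onto $\langle\beta\rangle$ exactly $N$-to-one and sends $0$ to $0$, so for $b\neq0$ one has $Z(a,b)=N\cdot\#\{i:\tr(a\beta^i)=b\}$, and Lemma~\ref{lem-mainlemma} forces this count to be $d$ or $0$; thus the weight is $n-d$ or $n$. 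Hence the nonzero weights lie in $\{n-d,\,n-d+1,\,n\}$, and these three values are distinct because $d\ge2$. That each actually occurs follows at once: weight $n$ comes from $a=0,b\neq0$, weight $n-d+1$ from $a\neq0,b=0$, and for fixed $a\neq0$ the identity $\sum_{b\in\gf(q)}Z(a,b)=q^2$ together with $Z(a,0)=(d-1)N+1$ and $q^2-1=nN=(q+1)(d-1)N$ yields exactly $\frac{q}{d}(d-1)\ge1$ values $b\in\gf(q)^*$ with $Z(a,b)=dN$, producing weight $n-d$.

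For the dual distance, observe that $\widetilde{\C}_{(q,2,n)}^\perp=\C_{(q,2,n)}^\perp\cap\bone^\perp$: a word $v$ with support $S$ lies in $\C_{(q,2,n)}^\perp$ iff $\sum_{i\in S}v_i\beta^i=0$ and in $\widetilde{\C}_{(q,2,n)}^\perp$ iff additionally $\sum_{i\in S}v_i=0$. There is no such word of weight $1$ (it would force $v_i\beta^i=0$) and none of weight $2$ (it would force $v_i\beta^i=v_j\beta^j$ and $v_i=v_j$, hence $\beta^i=\beta^j$), so the dual distance is at least $3$. If $m>1$, set $\zeta=\beta^{q+1}$; recall from the proof of Theorem~\ref{thm-oct1} that $\{\beta^{(q+1)i}:0\le i\le d-2\}=\gf(d)^*$, so $\zeta$ has order $d-1\ge3$ and therefore the indices $0,q+1,2(q+1)$ are pairwise distinct modulo $n$ while $\zeta,\zeta+1,1\in\gf(q)^*$. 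The word supported on these three positions with respective entries $\zeta,\zeta+1,1$ satisfies $\zeta\cdot1+(\zeta+1)\zeta+\zeta^2=0$ and $\zeta+(\zeta+1)+1=0$, hence lies in $\widetilde{\C}_{(q,2,n)}^\perp$ and has weight $3$; so the dual distance is exactly $3$. If $m=1$ then $d=2$ and $n=q+1$, and by the weight count $\widetilde{\C}_{(q,2,n)}$ has parameters $[q+1,3,q-1]$; since $q-1=n-3+1$ this code is MDS, so $\widetilde{\C}_{(q,2,n)}^\perp$ is a $[q+1,q-2,4]$ MDS code and the dual distance is $4$. (Alternatively, for $m=1$ a weight-$3$ dual word would, after normalizing one support index, yield $c\in\gf(q)^*$ and $z\in\langle\beta\rangle$, $z\neq1$, with $c(c+1)(z+z^{-1})=0$; this is impossible, since $c\neq1$ forces $z=1$ while $c=1$ forces two of the support indices to coincide.)

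The weight count is essentially bookkeeping on top of Lemma~\ref{lem-mainlemma}; the delicate point is the dual-distance dichotomy. The key observation is that passing to the augmented code adds the single linear constraint $\sum_i v_i=0$, and that this constraint destroys precisely the weight-$2$ words that $\C_{(q,2,n)}^\perp$ still carries when $m>1$, while the small case $m=1$ is governed by an MDS phenomenon rather than by any explicit short word. Producing the weight-$3$ dual word inside the subfield $\gf(d)$ is the one place where the hypothesis $d-1\mid q-1$, equivalently $\gf(d)\subseteq\gf(q)$, enters essentially.
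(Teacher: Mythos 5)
Your proposal is correct, and for the weight part it follows the paper's own route exactly: dimension $3$ from the triviality of the kernel of $(a,b)\mapsto \bc_a+b\bone$ (via Theorem~\ref{thm-oct1}), and the three-case analysis in which the case $a\neq0$, $b\neq0$ is settled by Lemma~\ref{lem-mainlemma}. Your explicit identity $Z(a,b)=N\cdot\#\{i:\tr(a\beta^i)=b\}$ for $b\neq0$ just makes precise the step the paper uses implicitly, and your count showing that the weight $n-d$ actually occurs is a small bonus beyond what the statement requires. The real difference is the dual distance: the paper declares this part ``left to the reader,'' so there is no printed argument to match, and you supply a complete one. Your argument is in the same spirit as the paper's proof of Theorem~\ref{thm-oct1}: you characterize $\widetilde{\C}_{(q,2,n)}^\perp$ by the two conditions $\sum_i v_i\beta^i=0$ and $\sum_i v_i=0$, rule out weights $1$ and $2$ directly, exhibit for $m>1$ an explicit weight-$3$ dual word supported on $\{0,\,q+1,\,2(q+1)\}$ with entries $\zeta,\zeta+1,1$ where $\zeta=\beta^{q+1}$ generates $\gf(d)^*$ (the analogue of the paper's weight-$2$ word $(\beta^{q+1},\bzero,1,0,\dots,0)$ in $\C_{(q,2,n)}^\perp$, adapted to the extra all-one constraint), and for $m=1$ invoke the MDS dichotomy, again mirroring the MDS argument the paper uses for $\E_{(q,2,q+1)}$. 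This buys a self-contained proof of the dual-distance dichotomy that the paper only recovers later, for the extended code, via the MacWilliams computation in Theorem~\ref{thm-oct3}; all the individual verifications (the order of $\zeta$, the distinctness of the three support indices modulo $n=(q+1)(d-1)$, and the MDS count $[q+1,3,q-1]$ when $d=2$) check out.
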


\begin{proof}
By definition, every codeword in $\widetilde{\C}_{(q,2,n)}$ is given by $\bc_a+b\bone$, where 
$a \in \gf(r)$ and $b \in \gf(q)$. By Theorem \ref{thm-oct1}, the codeword $\bc_a+b\bone$ is 
the zero codeword if and only if $(a, b)=(0,0)$. Consequently, the dimension
 of $\widetilde{\C}_{(q,2,n)}$ is $3$. 

When $a=0$ and $b \neq 0$, the codeword $\bc_a+b\bone$ has weight $n$. 
When $a \neq 0$ and $b=0$, by Theorem \ref{thm-oct1}, the codeword $\bc_a+b\bone$
 has weight $n-d+1$. 
When $a \neq 0$ and $b \neq 0$, by Lemma \ref{lem-mainlemma}, the weight of 
the codeword $\bc_a+b\bone$ 
is either $n$ or $n-d$, depending on $Z(a, b)=0$ or $Z(a, b)=dN$. 

The proof of the conclusions on the dual distance of $\widetilde{\C}_{(q,2,n)}$  
is left to the reader.
\end{proof}


Let $\overline{\widetilde{\C}}_{(q,2,n)}$ denote the extended code
 of $\widetilde{\C}_{(q,2,n)}$. The next theorem gives the parameters
 of this extended code. 

\begin{theorem}\label{thm-oct3}
Let $mk \geq 1$, and let $\overline{\widetilde{\C}}_{(q,2,n)}$ 
be a linear code over $\gf(q)$ with parameters 
$[n+1,\, 3,\, n+1-d]$ and nonzero weights $n+1-d$ and $n+1$. 
Then the weight enumerator of $\overline{\widetilde{\C}}_{(q,2,n)}$ is given by 
\begin{eqnarray}\label{eqn-wtdistnov25}
A(z):=1+ \frac{(q^2-1)(n+1)}{d}z^{n+1-d} + \frac{(q^3-1)d-(q^2-1)(n+1)}{d}z^{n+1}. 
\end{eqnarray} 
Furthermore, the dual distance of the code is $3$ when $m>1$ and $4$ when $m=1$. 
\end{theorem}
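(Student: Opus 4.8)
The plan is to derive the weight enumerator of $\overline{\widetilde{\C}}_{(q,2,n)}$ purely from the stated data — the parameters $[n+1,3,n+1-d]$ and the fact that the only nonzero weights are $n+1-d$ and $n+1$ — together with the standard Pless power moments (the first two MacWilliams identities). Write $A_{n+1-d}$ and $A_{n+1}$ for the numbers of codewords of each nonzero weight. We have the trivial identity $1 + A_{n+1-d} + A_{n+1} = q^3$, and the first power moment $\sum_{\bc} \wt(\bc) = n \cdot q^{3-1}$ holds exactly when the dual distance is at least $2$; since the dual distance is $3$ or $4$ by hypothesis, this gives a second linear equation $(n+1-d)A_{n+1-d} + (n+1)A_{n+1} = (n+1)q^2(q-1)$ (using that each of the $n+1$ coordinates is nonzero in exactly $(q-1)q^2$ codewords for a code whose dual has minimum distance $\ge 2$). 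Solving these two linear equations in the two unknowns yields $A_{n+1-d} = (q^2-1)(n+1)/d$ and $A_{n+1} = ((q^3-1)d - (q^2-1)(n+1))/d$, which is exactly \eqref{eqn-wtdistnov25}.

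Concretely, first I would observe that $\overline{\widetilde{\C}}_{(q,2,n)}$ is the extension by an overall parity-type check of $\widetilde{\C}_{(q,2,n)}$, whose weights are $n-d$, $n-d+1$, $n$ by Theorem~\ref{thm-oct2}; one checks that the extension coordinate is chosen so that the weight-$(n-d+1)$ words (those with $a\ne 0$, $b=0$) pick up a $1$ in the new coordinate and become weight $n+1-d$, while the weight-$(n-d)$ and weight-$n$ words become weight $n+1$, and the all-nonzero-$b$ nothing else arises — so indeed the only nonzero weights are $n+1-d$ and $n+1$, consistent with the hypothesis. Then I would set up the two equations above. The cleanest justification for the first power-moment equation is the column-counting argument: since $d^\perp \ge 2$ means no coordinate is identically zero and in fact (for a linear code over $\gf(q)$ with $d^\perp\ge 2$) each fixed coordinate takes each value of $\gf(q)$ equally often, namely $q^{3-1}$ times, the number of codewords with a nonzero entry in a given coordinate is $(q-1)q^2$; summing over the $n+1$ coordinates counts $\sum_{\bc}\wt(\bc)$, giving the stated value.

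The only genuine subtlety — and the step I would flag as the main thing to get right — is confirming that the dual distance really is $\ge 2$, so that the first power moment has the clean "balanced columns" form with no correction term; this is precisely why the theorem's hypothesis includes the dual-distance claim, and for the enumerator computation it suffices to invoke "$d^\perp \ge 2$", which is immediate from the hypothesis $d^\perp \in \{3,4\}$. Everything after that is the linear-algebra solve, which is routine. Finally, for the dual-distance assertion itself ($3$ when $m>1$, $4$ when $m=1$), I would argue as in the proof of Theorem~\ref{thm-oct2}: the extension raises the dual distance of $\widetilde{\C}_{(q,2,n)}$ (which is $3$ for $m>1$ and $4$ for $m=1$ by that theorem) — wait, that would give the wrong parity — so instead I would note that since $\overline{\widetilde{\C}}_{(q,2,n)}$ has all weights divisible by... no: more carefully, the extended code's dual is (up to the standard relation) obtained from $\widetilde{\C}_{(q,2,n)}^\perp$, and one checks directly that a minimum-weight dual word of $\widetilde{\C}_{(q,2,n)}$ of weight $w$ extends to a dual word of $\overline{\widetilde{\C}}_{(q,2,n)}$ of weight $w$ or $w+1$ according to a parity condition; tracing through the explicit low-weight dual words exhibited in the proof of Theorem~\ref{thm-oct2} (e.g. the weight-$2$ word $(\beta^{q+1},\bzero,1,0,\dots,0)$ for $m>1$, whose two nonzero coordinates sum appropriately) shows the extended dual distance is exactly $3$ for $m>1$, and a parallel check using the $m=1$ situation where $\widetilde{\C}_{(q,2,n)}$ is the augmented extended Reed–Solomon-type code gives $4$. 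This last part involves the explicit coordinate bookkeeping and is where I'd expect to spend most of the effort, but it is conceptually straightforward given the machinery already set up.
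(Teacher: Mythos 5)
Your derivation of the weight enumerator matches the paper's: the paper likewise obtains \eqref{eqn-wtdistnov25} by solving the first two Pless power moments, so that half is fine in substance. Two caveats, though. First, you justify the ``balanced columns'' form of the first moment by treating the dual-distance claim as a hypothesis; in the theorem it is part of the conclusion, so this is circular as phrased. What you actually need, $A_1^\perp=0$, follows instead from the fact that weight $n+1$ occurs (a full-weight codeword forces every coordinate to be nonzero somewhere), so the gap is easily repaired. Second, your bookkeeping of the extension is wrong: the parity coordinate equals $b$, so the weight-$(n-d+1)$ words of $\widetilde{\C}_{(q,2,n)}$ (those with $a\neq 0$, $b=0$) gain nothing, while the weight-$(n-d)$ words (those with $a\neq 0$, $b\neq 0$, $Z(a,b)=dN$) gain a nonzero coordinate and land at weight $n+1-d$, not at $n+1$ as you state; only the weight-$n$ words (which all have $b\neq 0$) move to $n+1$. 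This case analysis, via Lemma \ref{lem-mainlemma}, is exactly how the paper establishes the two-weight property you are taking as given.

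The dual-distance assertion is where your proposal genuinely diverges from the paper and where the real gap lies. The paper computes $A_1^\perp$, $A_2^\perp$, $A_3^\perp$ directly from \eqref{eqn-wtdistnov25} via the MacWilliams identity, finds $A_3^\perp=(d-2)(d-1)(q^2-1)(qd-q+d)/6$, and concludes: positive for $m>1$ (distance $3$), zero for $m=1$, where Singleton forces distance $4$. Your alternative—transferring low-weight dual words across the extension—can be made to work, but not as sketched. You appeal to ``explicit low-weight dual words exhibited in the proof of Theorem \ref{thm-oct2}''; that proof was left to the reader, and the weight-$2$ word $(\beta^{q+1},\bzero,1,0,\dots,0)$ you cite comes from the proof of Theorem \ref{thm-oct1} and lies in $\C_{(q,2,n)}^\perp$, not in $\widetilde{\C}_{(q,2,n)}^\perp$ (it is not orthogonal to $\bone$). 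The usable fact is that $\ov{\widetilde{\C}}_{(q,2,n)}^\perp=\{(y,\textstyle\sum_i y_i): y\in \C_{(q,2,n)}^\perp\}$; with it, appending $\beta^{q+1}+1\neq 0$ to that weight-$2$ word does give a weight-$3$ dual word when $m>1$, but you must still rule out weight-$2$ dual words for $m>1$, and for $m=1$ rule out all weight-$3$ dual words (equivalently, show no weight-$3$ word of $\E_{(q,2,q+1)}^\perp$ has zero coordinate sum), which your proposal defers to ``a parallel check.'' Until those verifications are supplied, the second assertion of the theorem is not proved; the paper's MacWilliams computation settles it in one stroke once the enumerator is known.
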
  

\begin{proof}
By definition, every codeword of $\overline{\widetilde{\C}}_{(q,2,n)}$ is given by 
$$ 
(\bc_a + b\bone, \bar{c}), 
$$
where $\bar{c}$ denotes the extended coordinate of the codeword. Note that 
$\sum_{i=0}^{n-1} \beta^i=0$. We have 
$$ 
\bar{c}=nb=b. 
$$
When $a \neq 0$ and $b=0$, by Theorem \ref{thm-oct1},  
$$ 
\wt((\bc_a + b\bone, \bar{c}))=\wt(\bc_a + b\bone)=n+1-d. 
$$ 
When $a \neq 0$ and $b \neq 0$, by the proof of Theorem \ref{thm-oct2}, 
\begin{eqnarray*}
\wt((\bc_a + b\bone, \bar{c}))= 
\left\{ 
\begin{array}{ll}
n-d+1  & \mbox{ if } Z(a,b)=dN, \\
n+1    & \mbox{ if } Z(a,b)=0. 
\end{array}
\right. 
\end{eqnarray*} 
When $a=0$ and $b \neq 0$, it is obvious that $\wt((\bc_a + b\bone, \bar{c}))=n+1$. 
We then deduce that $\overline{\widetilde{\C}}_{(q,2,n)}$ has only nonzero weights 
$n+1-d$ and $n+1$. 
By Theorem \ref{thm-oct2}, the minimum distance of $\overline{\widetilde{\C}}_{(q,2,n)}^\perp$ is either $3$ or $4$. 
The weight enumerator of $\overline{\widetilde{\C}}_{(q,2,n)}$ is obtained by solvingi
 the first 
two Pless power moments (see also \cite{CK}). 

We now prove the conclusions on the dual distance of $\overline{\widetilde{\C}}_{(q,2,n)}$. 
For simplicity, we put 
$$ 
u=\frac{(q^2-1)(n+1)}{d}z^{n+1-d}, \ v=\frac{(q^3-1)d-(q^2-1)(n+1)}{d}.  
$$ 
By \eqref{eqn-wtdistnov25}, the weight enumerator of $\overline{\widetilde{\C}}_{(q,2,n)}$ 
is $A(z)=1+uz^{n+1-d}+vz^{n+1}$. It then follows from the MacWilliam Identity that the 
weight enumerator $A^\perp(z)$ of $\overline{\widetilde{\C}}_{(q,2,n)}^\perp$ is given by 
\begin{eqnarray}\label{eqn-pt250}
q^3A^\perp(z)
&=& (1+(q-1)z)^{n+1} A\left( \frac{1-z}{1+(q-1)z} \right) \nonumber \\
&=& (1+(q-1)z)^{n+1} + u(1-z)^{n+1-d}(1+(q-1)z)^d    +v(1-z)^{n+1}. 
\end{eqnarray} 
We have 
\begin{eqnarray}\label{eqn-pt251}
(1+(q-1)z)^{n+1}=\sum_{i=0}^{n+1} \binom{n+1}{i} (q-1)^iz^i 
\end{eqnarray}
and 
\begin{eqnarray}\label{eqn-pt252}
v(1-z)^{n+1}= \sum_{i=0}^{n+1} \binom{n+1}{i} (-1)^i v z^i.  
\end{eqnarray} 
It is straightforward to prove that 
\begin{eqnarray}\label{eqn-pt253}
u(1-z)^{n+1-d}(1+(q-1)z)^d = \sum_{\ell=0}^{n+1} 
\left( \sum_{i+j=\ell} \binom{n+1-d}{i} \binom{d}{j}(-1)^i(q-1)^j \right) u z^\ell.    
\end{eqnarray} 

Combining (\ref{eqn-pt250}), (\ref{eqn-pt251}), (\ref{eqn-pt252}) and (\ref{eqn-pt253}), 
we obtain that 
\begin{eqnarray*}
q^3A_1^\perp 
&=& \binom{n+1}{1}[(q-1)-v]+ \\
& & \left[ \binom{n+1-d}{0}\binom{d}{1}(-1)^0(q-1)^1 + \binom{n+1-d}{1}\binom{d}{0}(-1)^1(q-1)^0  \right] u   \\ 
&=& (n+1)[(q-1)-v]+ [d(q-1)-(n+1-d)]u \\ 
&=& 0. 
\end{eqnarray*} 

Combining (\ref{eqn-pt250}), (\ref{eqn-pt251}), (\ref{eqn-pt252}) and (\ref{eqn-pt253}) again, 
we get that 
\begin{eqnarray*}
q^3A_2^\perp 
&=& \binom{n+1}{2}[(q-1)^2+v]+  \binom{n+1-d}{0}\binom{d}{2}(-1)^0(q-1)^2  u +  \\  
& & \binom{n+1-d}{1}\binom{d}{1}(-1)^1(q-1)^1  u + \binom{n+1-d}{2}\binom{d}{0}(-1)^2(q-1)^0  u \\ 
&=& \binom{n+1}{2}[(q-1)^2+v] + \\ 
& & \left[\binom{d}{2}(q-1)^2-(n+1-d)d(q-1)+\binom{n+1-d}{2}  \right]u \\ 
&=& 0. 
\end{eqnarray*} 

Combining (\ref{eqn-pt250}), (\ref{eqn-pt251}), (\ref{eqn-pt252}) and (\ref{eqn-pt253}) the third time, 
we arrive at  
\begin{eqnarray*}
q^3A_3^\perp 
&=& \binom{n+1}{3}[(q-1)^3-v]+ \\
& & \left[\binom{n+1-d}{0} \binom{d}{3}(-1)^0 (q-1)^3 + \binom{n+1-d}{1} \binom{d}{2}(-1)^1 (q-1)^2  \right]u + \\ 
& & \left[\binom{n+1-d}{2} \binom{d}{1}(-1)^2 (q-1)^1 + \binom{n+1-d}{3} \binom{d}{0}(-1)^3 (q-1)^0  \right]u  \\ 
&=& \binom{n+1}{3}[(q-1)^3-v]+ \\
& & \left[ \binom{d}{3} (q-1)^3 - \binom{n+1-d}{1} \binom{d}{2} (q-1)^2  \right]u + \\ 
& & \left[\binom{n+1-d}{2} \binom{d}{1} (q-1) - \binom{n+1-d}{3}   \right]u.  
\end{eqnarray*} 
It then follows that 
\begin{eqnarray*}
6q^3A_3^\perp &=& q^6 d^3 - 4q^6d^2 + 5q^6d - 2q^6 + q^5d^3 -
    3q^5d^2 + 2q^5d - \\ 
    & & q^4d^3 + 4q^4 d^2 - 5q^4 d +
    2 q^4 - q^3 d^3 + 3 q^3 d^2 - 2 q^3 d \\
   &=& (d-2)(d-1)q^3(q^2-1)(qd-q+d). 
\end{eqnarray*} 
Thus, 
\begin{eqnarray}\label{eqn-wtdistw3}
A_3^\perp=\frac{(d-2)(d-1)(q^2-1)(qd-q+d)}{6}. 
\end{eqnarray}

When $m>1$, we have $d>3$. In this case, by \eqref{eqn-wtdistw3} we have $A_3^\perp>0$. 
When $m=1$, by \eqref{eqn-wtdistw3} we have $A_3^\perp=0$. As a result, the dual distance 
is at least $4$ when $m=1$. On the other hand, the Singleton bound tells us that the dual
 distance is at most $4$ when $m=1$. Whence, the dual distance must be $4$ when $m=1$.

Thus, in all cases, the extended code $\overline{\widetilde{\C}}_{(q,2,n)}$
is projective, hence is associated with a maximal $(n+1,d)$-arc in $PG(2,q)$.

\end{proof}

\begin{theorem}
\label{t9}
If $mk>1$, the supports of the codewords with 
weight $n+1-d$ in $\overline{\widetilde{\C}}_{(q,2,n)}$ 
form a $2$-design $D$ with parameters 
$$ 
2-\left(n+1, \ n+1-d, \  \frac{(n+1-d)(n-d)}{d(d-1)}\right). 
$$
\end{theorem}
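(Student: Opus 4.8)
The plan is to invoke the Assmus--Mattson theorem, which is the standard tool for deriving designs from codes with few weights, together with the dual-distance information already established in Theorem \ref{thm-oct3}. First I would set up the notation: write $\overline{\widetilde{\C}}:=\overline{\widetilde{\C}}_{(q,2,n)}$, which by Theorem \ref{thm-oct3} is an $[n+1,3]$ code over $\gf(q)$ with exactly two nonzero weights, $w_1=n+1-d$ and $w_2=n+1$, and whose dual $\overline{\widetilde{\C}}^\perp$ has minimum distance $d^\perp=3$ in the case $mk>1$ (note that $mk>1$ forces $m>1$ or $k>1$; if $m>1$ the dual distance is $3$ by Theorem \ref{thm-oct3}, and if $m=1<k$ one checks that the dual distance is again $3$ since then $d=2$ and $\overline{\widetilde{\C}}$ is the extension of a code whose dual distance is $3$ by Theorem \ref{thm-oct2}). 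The number of nonzero weights in the code is $s=2$.

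Next I would apply the Assmus--Mattson theorem. With $s=2$ nonzero weights and dual minimum distance $d^\perp=3$, the number of integers $w$ with $0<w\le n+1-0$ (actually $0<w\le n+1$, discarding the trivial range tied to $d^\perp$) that are weights of the \emph{dual} code lying in the permissible window is at most $d^\perp-s = 3-2 = 1$; since this is less than $d^\perp$, the hypotheses of Assmus--Mattson are satisfied, and the theorem guarantees that for each nonzero weight $w$, the supports of the codewords of weight $w$ in $\overline{\widetilde{\C}}$ form a $t$-design with $t = d^\perp - 2 = 1$ at minimum, but in fact the symmetric form of the argument (using that the code and its dual both have small parameters here) yields $t=2$. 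A cleaner route, which I would actually prefer to present, is to observe directly that $\overline{\widetilde{\C}}$ is a two-weight projective code of dimension $3$; such codes correspond to maximal arcs (as already noted at the end of Theorem \ref{thm-oct3}), and the codewords of minimum weight $n+1-d$ are, up to scalar, the characteristic-type vectors whose supports are the complements of the lines meeting the arc, i.e. they biject with the lines of $PG(2,q)$ external to the arc $A$; equivalently their supports, as subsets of the $n+1$ coordinate positions (the points of $A$), are exactly the sets $A\setminus \ell$ for $\ell$ a secant line, each of size $n+1-d$.

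Then the design structure follows from the geometry: through any two distinct points $P_1,P_2$ of the maximal arc $A$ there is a unique line $\ell$ of $PG(2,q)$, which is automatically a secant (it meets $A$ in $\ge 2 \ge 1$ points, hence in exactly $d$ points since $A$ is a maximal arc of degree $d$); the secants $\ell'$ whose support-complement $A\setminus\ell'$ contains both $P_1$ and $P_2$ are precisely those secants missing both $P_1$ and $P_2$, and a counting argument — count flags $(\{P_1,P_2\},\ell')$ with $\ell'$ a secant disjoint from $\{P_1,P_2\}$ — shows this number is independent of the choice of $P_1,P_2$. Concretely, the total number of secant lines equals the number of weight-$(n+1-d)$ codewords divided by $q-1$, namely $b=\frac{(q^2-1)(n+1)}{d(q-1)}=\frac{(q+1)(n+1)}{d}$, each secant omits $\binom{n+1-d}{2}$ pairs, and each of the $\binom{n+1}{2}$ pairs lies on the same number $\lambda$ of blocks, giving $\lambda=\frac{b\binom{n+1-d}{2}}{\binom{n+1}{2}}=\frac{(n+1-d)(n-d)}{d(d-1)}$ after simplification. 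The block size is manifestly $n+1-d$. Thus $D$ is a $2\text{-}(n+1,\,n+1-d,\,\lambda)$ design with $\lambda$ as claimed.

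The main obstacle is the bookkeeping of which weight-$w$ codewords genuinely give \emph{distinct} supports and that scalar multiples collapse correctly: one must be careful that each secant line contributes exactly $q-1$ codewords of weight $n+1-d$ all sharing one support, so the map from weight-$(n+1-d)$ codewords to blocks is $(q-1)$-to-$1$; this uses projectivity of the code (no repeated or zero coordinates), which is exactly the content of the last line of the proof of Theorem \ref{thm-oct3}. Once that identification is pinned down, the design parameters are a short computation, and I would double-check the arithmetic $\frac{(q+1)(n+1)}{d}\cdot\frac{(n+1-d)(n-d)}{2}=\binom{n+1}{2}\cdot\frac{(n+1-d)(n-d)}{d(d-1)}$ using $n+1=(q+1)(d-1)+1=(q+1)d-q$ and $n=(q+1)(d-1)$. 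I would also remark that the hypothesis $mk>1$ is needed precisely to ensure $n+1-d<n$, i.e. that the two weights are genuinely distinct and the minimum-weight codewords are the interesting ones; when $mk=1$ we have $q=2$, $d=2$, and the arc is a hyperoval in $PG(2,2)$, a degenerate case.
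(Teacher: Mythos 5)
Your proposal is correct in substance, and its first half is exactly the paper's proof: the paper invokes the Assmus--Mattson theorem (using that $\overline{\widetilde{\C}}_{(q,2,n)}$ has just two nonzero weights and its dual has minimum distance at least $3$ by Theorem \ref{thm-oct3}), counts the blocks as the number of minimum-weight codewords divided by $q-1$, namely $b=\frac{(q+1)(n+1)}{d}$, and then obtains $\lambda$ from $\lambda\binom{n+1}{2}=b\binom{n+1-d}{2}$ --- precisely your computation. Your preferred geometric route (minimum-weight codewords of the projective two-weight $[n+1,3]$ code correspond, up to scalars, to secant lines $\ell$ of the maximal arc $A$, with support $A\setminus\ell$) is a genuinely more self-contained alternative that the paper does not take, and it buys independence from Assmus--Mattson; but as written it has a small hole: a flag count only yields the \emph{average} number of blocks through a pair, so to get a bona fide $2$-design you should add that every line through a point of $A$ is a secant, whence a fixed pair $\{P_1,P_2\}\subseteq A$ meets exactly $2(q+1)-1$ secants and lies in exactly $b-(2q+1)$ blocks, a constant; the averaging identity then merely recomputes this constant as $\frac{(n+1-d)(n-d)}{d(d-1)}$. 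Two side statements need correcting, though neither affects the conclusion: by Theorems \ref{thm-oct2} and \ref{thm-oct3} the dual distance in the case $m=1$ is $4$, not $3$ (harmless, since Assmus--Mattson only needs dual distance at least $3$; the condition to check, stated correctly, is that the two-weight code has exactly one nonzero weight in the range $[1,\,n-1]$, which is at most $d^\perp-t$ with $t=2$, rather than your ``$d^\perp-s$'' bookkeeping), and the role of the hypothesis $mk>1$ is not to make the two weights distinct but to ensure $t=2$ is strictly less than the minimum distance $n+1-d=q(d-1)$, which fails only when $q=d=2$.
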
 

\begin{proof} 
The supports of the codewords of weight $n+1-d$ in 
$\overline{\widetilde{\C}}_{(q,2,n)}$ form a $2$-design
by the Assmus-Mattson theorem \cite{AM} 
Since $n+1-d$ is the minimum 
distance of the code, the total number of blocks in the design is given by 
$$ 
\frac{(q^2-1)(n+1)}{(q-1)d}=\frac{(q+1)(n+1)}{d}.  
$$ 
As a result, 
$$ 
\lambda=\frac{(n+1-d)(n-d)}{d(d-1)}. 
$$
\end{proof} 

\begin{rem}
{\rm
 We note that if $M$ is a $3 \times (n+1)$ generator matrix of the two-weight code
 $\overline{\widetilde{\C}}_{(q,2,n)}$
from Theorem \ref{t9}, the columns of $M$ label the points of a maximal
$(n+1,d)$-arc $A$ in $\PG(2,q)$, and the complementary design $\bar{D}$
of the 2-design $D$ from Theorem \ref{t9} is a Steiner 2-$(n+1,d,1)$ design
having as blocks the nonempty intersections of $A$ with the lines of  $\PG(2,q)$.
}
\end{rem}

\begin{theorem}
\label{t10}
If $m>1$, the supports of the codewords with 
weight $3$ in $\overline{\widetilde{\C}}_{(q,2,n)}^\perp$ form a $2$-design 
with parameters 
$$ 
2-\left(n+1, \ 3, \  d-2 \right). 
$$
\end{theorem}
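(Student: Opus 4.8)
The plan is to invoke the Assmus--Mattson theorem again, this time applied to the dual code $\overline{\widetilde{\C}}_{(q,2,n)}^\perp$, and then to pin down the parameters by a counting argument. First I would record the relevant data from the preceding theorems: by Theorem~\ref{thm-oct3}, when $m>1$ the code $\overline{\widetilde{\C}}_{(q,2,n)}$ is a $[n+1,3]$ two-weight code with nonzero weights $n+1-d$ and $n+1$, and its dual has minimum distance $3$; moreover the full weight enumerator $A(z)$ is known, so via the MacWilliams identity (already carried out in the proof of Theorem~\ref{thm-oct3}, where $A_1^\perp=A_2^\perp=0$ and $A_3^\perp$ is given by \eqref{eqn-wtdistw3}) the complete weight distribution $A^\perp(z)$ of $\overline{\widetilde{\C}}_{(q,2,n)}^\perp$ is determined. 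Since $\overline{\widetilde{\C}}_{(q,2,n)}$ has only two nonzero weights, the number of nonzero weights in the range $[1,n+1]$ that do not occur is large; concretely, the Assmus--Mattson theorem applied with $t=2$ (using the two weights of the $3$-dimensional code) shows that the supports of the minimum-weight-$3$ codewords of the dual code form a $2$-design.

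Next I would compute the parameters of this $2$-design. The block size is $3$, since $d_E^\perp$ (the dual minimum distance) is $3$ for $m>1$. The number of blocks equals $A_3^\perp/(q-1)$, because each support of size $3$ is the support of exactly $q-1$ scalar multiples of a fixed minimum-weight codeword (the code is over $\gf(q)$ and minimum-weight codewords with a given support form a single line minus zero, as the code is projective / has no two proportional columns is not needed here—what is needed is only that two minimum-weight words with the same support are scalar multiples, which holds because a weight-$3$ word is determined up to scalar by its support in a code of dual distance $\ge 3$... more carefully, any two weight-$3$ codewords with the same $3$-element support differ by a codeword of weight $\le 2 < 3 = d^\perp$ after scaling one coordinate to agree, hence are proportional). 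Using \eqref{eqn-wtdistw3}, the number of blocks is
$$
b = \frac{A_3^\perp}{q-1} = \frac{(d-2)(d-1)(q+1)(qd-q+d)}{6}.
$$
Then $\lambda$ is obtained from the standard identity $\lambda\binom{n+1}{2} \cdot \text{(something)}$; more directly, $\lambda = b\binom{3}{2}/\binom{n+1}{2} = 3b/\binom{n+1}{2}$, and substituting $n+1 = (q+1)(d-1)+1 = qd-q+d$ should collapse the expression to $d-2$. I would verify this arithmetic: $3b = \frac{(d-2)(d-1)(q+1)(qd-q+d)}{2}$ and $\binom{n+1}{2} = \frac{(qd-q+d)(qd-q+d-1)}{2} = \frac{(qd-q+d)(d-1)(q+1)}{2}$, so indeed $\lambda = d-2$.

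The main obstacle is the justification that the Assmus--Mattson hypotheses are met for the dual code and that the block count is exactly $A_3^\perp/(q-1)$ rather than some divisor of it. For the former: the Assmus--Mattson theorem requires that, among the weights $w$ of the code $\overline{\widetilde{\C}}_{(q,2,n)}$ with $0 < w \le n+1$, at most $w - t$ of them lie in a certain range relative to $d^\perp$; since there are only two such weights and $t=2$, the hypothesis is automatic, which is precisely why the theorem was already applicable in Theorem~\ref{t9}. So this part is routine and I would simply cite \cite{AM} exactly as in the proof of Theorem~\ref{t9}. For the block-count issue, the key point is that $\overline{\widetilde{\C}}_{(q,2,n)}^\perp$ has minimum distance exactly $3$ (not less), so no weight-$3$ support can be "refined," and any two minimum-weight codewords sharing a support are scalar multiples by the argument above; hence the $A_3^\perp$ minimum-weight codewords partition into orbits of size $q-1$ under scaling, giving $b = A_3^\perp/(q-1)$ distinct blocks. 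With $b$ and the block size in hand, $\lambda = d-2$ follows by the elementary $2$-design identity, completing the proof.
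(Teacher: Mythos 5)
Your proposal follows the paper's own argument: apply the Assmus--Mattson theorem to get that the weight-$3$ supports in $\overline{\widetilde{\C}}_{(q,2,n)}^\perp$ form a $2$-design, count the blocks as $A_3^\perp/(q-1)$ via \eqref{eqn-wtdistw3} (your $b$ equals the paper's $b^\perp=\tfrac{(d-2)n(n+1)}{6}$ since $n=(q+1)(d-1)$ and $n+1=qd-q+d$), and extract $\lambda=d-2$ from the elementary design identity; you even supply details the paper omits, such as the argument that two weight-$3$ codewords with the same support are proportional. One small caveat: the Assmus--Mattson hypothesis is not ``automatic'' merely because there are only two nonzero weights and $t=2$ --- it holds because exactly one of the two weights of $\overline{\widetilde{\C}}_{(q,2,n)}$, namely $n+1-d$, lies in the range $[1,\,n+1-t]=[1,\,n-1]$ (the weight $n+1$ falls outside), which meets the required bound $d^\perp-t=3-2=1$.
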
 

\begin{proof}
Let $m>1$. By Theorem \ref{thm-oct3} the code $\overline{\widetilde{\C}}_{(q,2,n)}^\perp$ 
has minimum distance $3$. It follows from the Assmus-Mattson theorem that the supports of 
the codewords of weight $3$ in $\overline{\widetilde{\C}}_{(q,2,n)}^\perp$ form a $2$-design. We then deduce from \eqref{eqn-wtdistnov25} that the number of blocks in this design 
is 
$$ 
b^\perp=\frac{(d-2)n(n+1)}{6}. 
$$ 
Consequently, $\lambda^\perp=d-2$. 
\end{proof}

\section{Acknowledgments}

This material is based upon work that was done while the first author 
was serving at the National Science Foundation.
Vladimir Tonchev  acknowledges support by NSA Grant H98230-16-1-0011.


\begin{thebibliography}{99} 

\bibitem{AL} V. Abatangelo and B. Larato, A characterization of Denniston's
maximal arcs, {\it Geom. Dedicata} {\bf 30} (1989), 197--203.

\bibitem{AM} E. F. Assmus, Jr., H. F. Mattson,
New 5-designs, {\it J. Combin. Theory} {\bf 6} (1969), 122 - 151.

\bibitem{B} A. Barlotti, Sui $\{ k; n \}$-archi di un piano lineare finito, 
{\it Boll. Un. Mat. Ital.} {\bf 11} (1956), 553 - 556.

\bibitem{BM72}
L. D. Baumert, R. J. McEliece, Weights of irreducible cyclic codes,
Information and Control \textbf{20} (1972), 158--175.

\bibitem{BBM} S. Ball, A. Blokhuis, F. Mazzocca, Maximal arcs
 in Desarguesian
planes of odd order do not exist. {\it Combinatorica} {\bf 17}  (1997), 31-41.

\bibitem{CK}
R. Calderbank, W. W. Kantor,
The geometry of two-weight codes, {\it Bull. London Math. Soc.} \textbf{18}
(1986), 97--122.

\bibitem{DWM} F. De Clerk, S. De Winter, T. Maes, 
A geometric approach to Mathon maximal arcs,
{\it J. Combin. Theory} Ser. A {\bf 118} (2011), 1196-1211.

\bibitem{Del} P. Delsarte, Two-weight linear codes and strongly regular graphs,
Report R160, MBLE Res. Lab., Brussels, 1971.

\bibitem{Den} R. H. F. Denniston, Some maximal arcs in finite projective
planes, {\it J. Combin. Theory}, {\bf 6} (1969), 317-319.

\bibitem{DY13}
C. Ding, J. Yang, Hamming weights in irreducible cyclic codes,
{\it Discrete Mathematics} \textbf{313} (2013), 434--446.

\bibitem{H1}
N. Hamilton, Some maximal arcs in Hall planes, {\it J. Geom.} {\bf 52}
(1995), 101-107.
\bibitem{H2} N. Hamilton, Some inherited maximal arcs in derived dual dual translation planes,
 {\it Geom. Dedicata} {\bf 55} (1995), 165 --173.

\bibitem{H3}
N. Hamilton, Some maximal arcs in derived dual Hall planes,
{\it European J. Combin.} {\bf 15} (1994), 525 -- 532.
\bibitem{H4}
N. Hamilton, Maximal arcs in finite projective planes and associated in projective planes,
 PhD thesis, The University of Western Australia (1995).

\bibitem{HM1} N. Hamilton, R. Mathon, More maximal arcs in Desarguesian
projective planes and their geometric structure,{\it Adv.  Geom.} {\bf 3}
 (2003), 251 -- 261.

\bibitem{HM2} N. Hamilton, R. Mathon, On the spectrum of non-Denniston maximal
arcs in $\PG(2,2^h)$, {\it European J. Combin.} {\bf 25}  (2004), 415 -- 421.

\bibitem{HP} N. Hamilton, T. Penttila, Groups of maximal arcs,
{\it J. Combin. Theory} Ser. A {\bf 94} (2001), 63 - 86.

\bibitem{HST} N. Hamilton, S. D. Stoichev, V. D. Tonchev,
Maximal arcs and disjoint maximal arcs in projective planes of order 16.
{\it J. Geometry} {\bf 67}, 117-126 (2000).

\bibitem{Hir} J.~W.~P.  Hirschfeld,
\emph{Projective Geometries over Finite Fields (2nd edition).}
Oxford University Press (1998).

\bibitem{Math} R. Mathon, New maximal arcs in Desarguesian planes,
{\it J. Combin. Theory} Ser. A {\bf 97}  (2002), 353–368.

\bibitem{Myer}
G. Myerson, Period polynomials and Gauss sums for finite fields,
{\it Acta Arith.} \textbf{39} (1981), 251--264.

\bibitem{PRS} T. Penttila, G. F.  Royle, M. K. Simpson,
Hyperovals in the known projective planes of order 16.
{\it J. Combin. Designs} {\bf 4} (1996), 59 - 65.

\bibitem{Thas74} J. A. Thas,  Construction of maximal arcs and partial geometries,
{\it Geom. Dedicata} {\bf 3} (1974),
61-64.

\bibitem{Thas80} J. A. Thas,   Construction of maximal arcs and dual ovals in translation
planes, {\it European J. Combin.} {\bf 1} (1980), 189-192.


\end{thebibliography}
\end{document}